\theoremstyle{plain}
\newtheorem{thm}{Theorem}[section]
\newtheorem{lem}[thm]{Lemma}
\newtheorem{prop}[thm]{Proposition}
\theoremstyle{definition}
\newtheorem{solution}[thm]{Solution}
\newtheorem{nonsingular}[thm]{Nonsingular solutions}
\newtheorem*{Ack}{Acknowledgement}
\theoremstyle{remark}
\def\K{\mathbb{K}}
\def\A{\mathbb{A}}
\def\e{\mathbf{e}}
\def\f{\frac}
\begin{document}
\thispagestyle{empty}

\title[Irreducible representations by matrix solutions]{Irreducible representations of the quantum Weyl algebra at roots of unity given by matrices }

\author{Blaise Heider and Linhong Wang}

\address{Department of Mathematics\\ Southeastern Louisiana University\\ Hammond, LA 70402}

\email{Blaise.Heider@selu.edu, lwang@selu.edu}

\thanks{The first author was supported by an undergraduate research grant from CST at Southeastern Louisiana University. Research of the second author supported by the Louisiana Board of Regents [LEQSF(2012-15)-RD-A-20].}

\keywords{quantum Weyl algebra, representations, matrix equations}

\subjclass[2010]{16D60, 81R50}

\begin{abstract}
To describe the representation theory of the quantum Weyl algebra at an $l$th primitive root $\gamma$ of unity, Boyette, Leyk, Plunkett, Sipe, and Talley found all nonsingular irreducible matrix solutions to the equation $yx-\gamma xy=1$, assuming $yx\neq xy$. In this note, we complete their result by finding and classifying, up to equivalence, all irreducible matrix solutions $(X, Y)$, where $X$ is singular.
\end{abstract}

\maketitle

\section{Introduction}
\thispagestyle{empty}
Let $\K$ be an algebraically closed field, $\gamma$ a nonzero scalar in $\K$. The irreducible representations of the quantized Weyl algebra $\A=\K\{x,\,y\}/\langle yx-\gamma xy -1\rangle$ has been constructed and classified in \cite{DGO} and \cite{Jor}. It is well known that, when $\gamma$ is a root of unity, any irreducible representation of $\A$ is finite dimensional. In \cite{BLTPS}, Boyette, Leyk, Plunkett, Sipe, and Talley present a linear algebra method to prove the Drozd-Guzner-Ovsienko result in the case when $\gamma$ is a primitive root of unity. Let $V$ be an $\A$-module and $\rho: \A \rightarrow \text{End}(V) \hookrightarrow M_n(\K)$ be a representation of $\A$. Then $\rho$ is irreducible, i.e., $V$ is simple, if and only if $\rho(\A)=\text{M}_n(\K)$. Their approach then is to find explicitly, up to equivalence, all irreducible matrix solutions to the equation $yx-\gamma xy =1$ with $yx \neq xy$. Two $n\times n$ matrices $X$ and $Y$ form a \emph{matrix solution} if $YX-\gamma XY=I$, the identity matrix. A solution $(X,Y)$ is \emph{irreducible} if every matrix in $M_n(\K)$ can be written as a noncommutative polynomial in $X$ and $Y$ over $\K$, assuming the zero power of a matrix is the identity matrix. Two solutions $(X, Y)$ and $(M, N)$ are \emph{equivalent} if there is a nonsingular matrix $Q$ such that $QXQ^{-1}=M$ and $QYQ^{-1}=N$.

Throughout, $\gamma$ is an $l$th primitive root of unity for some integer $l \geq 2$. Unless specified otherwise, any \emph{solution} is a matrix solution to the equation $yx-\gamma xy =1$.

\begin{nonsingular}\label{nonsingular}
Suppose $(X, Y)$ is an irreducible solution and $U=YX-XY \neq 0$. The following are proved in \cite{BLTPS}.

(i) $YX^l=X^lY$ and $Y^lX=XY^l$, and so $X^l$ and $Y^l$ are scalar matrices. It follows that any irreducible matrix solutions is at most $l\times l$.

(ii) $UX=\gamma XU$, $YU=\gamma UY$, and $U$ is nonsingular. If $X$ has one nonzero eigenvalue, say $\lambda$, then $X$ has at least $l$ distinct eigenvalues, $\gamma\lambda, \, \gamma^2\lambda, \, \ldots, \gamma^l\lambda$. In particular, $(X, Y)$ is at least $l\times l$.

(iii) When $X$ has at least one nonzero eigenvalue, all irreducible solutions are $l\times l$ and found explicitly in \cite{BLTPS} as follows.
{\small
\[ X_{\lambda}=\lambda \left(%
 \begin{array}{cccc}
\gamma & 0 & \ldots &0  \\
 0 & \gamma^2 &\ldots &0  \\
 \ &\ & \ & \  \\
 \vdots & \vdots & \ddots &\vdots   \\
 \ &\ & \ & \  \\
 0 & 0 & \ldots & \gamma^l \\
\end{array}%
\right)\;
Y_{\lambda\, b's} =\left(%
\begin{array}{cccccc}
 \frac{1}{(1-\gamma)\gamma \lambda} & b_1 &0 &\ldots   &0 &0  \\
  0 & \frac{1}{(1-\gamma)\gamma^2 \lambda} &b_2&\ldots   &0 &0  \\
  \ & \ & \ &\ & \ &\  \\
   \vdots & \vdots  & \vdots &\ddots  &\vdots & \vdots \\
 \ & \ & \ &\ & \ &\ \\
 0& 0 & 0 &\ldots & \frac{1}{(1-\gamma)\gamma^{l-1} \lambda}& b_{l-1} \\
b_l &0 &0 & \ldots & 0 &   \frac{1}{(1-\gamma)\gamma^l \lambda} \\
\end{array}%
\right)
\]
} where $\lambda, b$'s are nonzero scalars in $\K$. The two matrices in each of these solutions are both nonsingular. These solutions are corresponding to the $x$-, $y$-torsion-free simple $\A$-modules. (cf. \cite[Summary 3.7]{Jor})
\end{nonsingular}

However, for an irreducible solution $(X, Y)$, it is not always true that $X$ has nonzero eigenvalues. For example, when $l=2$, $\gamma=-1$, the matrices \[X=\left(
\begin{array}{cc}
0 & 1 \\
0 & 0 \\
\end{array}
\right)\quad \text{and} \quad
Y=\left(
\begin{array}{cc}
0 & 0 \\
1 & 0 \\
\end{array}
\right)\] form an irreducible solution to the equation $yx+xy=1$. In fact, this solution gives the only simple $A$-module, where $\A=\K \{x, y\}/\langle
yx+xy=1\rangle$, that is both $x$- and $y$-torsion. (cf. \cite[Example 4.1]{Jor})  Hence, the remark (ii) in \cite{BLTPS} and the assertion in \cite[Proposition 2]{BLTPS} are incorrect.

In this note, we complete \cite[Proposition 2]{BLTPS} by finding, up to equivalence, all irreducible matrix solutions $(X,Y)$, where $X$ only has zero eigenvalue. Moreover, the two types of irreducible matrix solutions, with $X$ nonsingular or singular, are classified up to equivalence. These irreducible solutions give explicitly all irreducible representations of the quantum Weyl algebra at the root $\gamma$.

\begin{Ack}
This problem arose in conversations between the second author and her Ph.~D. advisor, E.~Letzter. The authors are thankful for his generosity. The authors are also grateful to the referee for the comments that lead to Lemma \ref{reduce-singular}.
\end{Ack}

\section{irreducible matrix solutions with $X$ singular}
Direct computation shows that the following is a solution.

\begin{solution}\label{solution}
{\small
\[X= \left(%
\begin{array}{ccccc}
 0 & 1 & \ &\ & \  \\
 \ & 0 & 1 &\ & \  \\
 \ & \ & \ &\ddots &   \\
 \ & \ & \ & 0 &  1 \\
 \ & \ & \ & \ & 0 \\
\end{array}%
\right)_{l\times l}
Y=Y_{\beta} =\left(%
\begin{array}{cccccc}
 0 & 0 &\ldots   &0 &0 &\beta  \\
  \sum_{i=0}^{l-2} \gamma^{i} & 0 &\ldots   &0 &0 &0  \\
   0 & \sum_{i=0}^{l-3} \gamma^{i} &\ldots  &0  &0 &0 \\

   \vdots & \vdots &\ddots &\vdots & \vdots &\vdots \\

 0& 0 &\ldots  & 1+\gamma &0& 0 \\
 0 &0 &\ldots & 0 & 1&  0 \\
\end{array}%
\right)\]
}
where $\beta$ is a scalar in $\K$. The following on the matrices $X$ and $Y$ are either standard or straightforward.

(i) Note that $X$ is the well-known upper shift matrix, and $X^l=0$. Fix a positive integer $u\leq l-1$. Then $X^u$ is a matrix with $u$th superdiagonal line all ones and zeroes elsewhere. Premultiplying a matrix $A$ by $X^u$ results in a matrix whose last $u$ rows are all zeroes and the first $l-u$ rows are the last $l-u$ rows of the matrix $A$.

(ii) Note that each $Y_{k, k-1}$ along the subdiagonal line of $Y$ is the only nonzero entry on the $k$th row of $Y$. Fix a positive integer $v\leq l-1$. It follows that
\[
(Y^v)_{l,k}
=\left\{\begin{array}{ll}
0 & k\neq l-v\\
\prod_{j=l-v+1}^{l} \sum_{i=0}^{l-j} \gamma^{i} & k=l-v
\end{array}\right.
\]
That is, the only nonzero entry on the $l$th row of the matrix $Y^v$ is $(Y^v)_{l, l-v}$.
\end{solution}

\begin{prop}
The solution $(X,Y)$ in \emph{(\ref{solution})} is irreducible.
\end{prop}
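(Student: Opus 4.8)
The plan is to show that the subalgebra of $M_l(\K)$ generated by $X$ and $Y$ contains every standard matrix unit $E_{ij}$, which forces that subalgebra to equal $M_l(\K)$ and hence makes $(X,Y)$ irreducible. The engine is the pair of observations recorded in (\ref{solution}): by (i), premultiplication by $X^{l-1}$ collapses a matrix onto its first row, which becomes the old last row while all other rows vanish; and by (ii), the last row of $Y^v$ has a single nonzero entry, namely $\prod_{j=l-v+1}^{l}\sum_{i=0}^{l-j}\gamma^i$ sitting in column $l-v$.

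First I would check that all the relevant scalars are nonzero. Each subdiagonal entry $Y_{k,k-1}=\sum_{i=0}^{l-k}\gamma^i$ equals $(1-\gamma^{l-k+1})/(1-\gamma)$, which vanishes only when $l\mid l-k+1$; since here $l-k\le l-2$, every such entry is nonzero. Consequently the scalar $c_v:=\prod_{j=l-v+1}^{l}\sum_{i=0}^{l-j}\gamma^i$ from (ii) is a product of nonzero partial geometric sums and is therefore nonzero for each $v$.

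Next I would extract the first row of matrix units. Combining the two facts above yields $X^{l-1}Y^v=c_v E_{1,\,l-v}$ for $1\le v\le l-1$, while $X^{l-1}=E_{1,l}$ directly. Dividing out the nonzero scalars $c_v$ shows that $E_{1,j}$ lies in the generated algebra for every $j=1,\dots,l$. I would then propagate down the rows by premultiplying with $Y$: a direct computation gives $YE_{i,j}=Y_{i+1,\,i}E_{i+1,\,j}$ for $1\le i\le l-1$, with coefficient one of the nonzero subdiagonal entries above. Inducting on $i$ from the first row therefore produces $E_{i,j}$ for all $1\le i,j\le l$. Since each $E_{ij}$ is a scalar multiple of a monomial in $X$ and $Y$, these matrix units generate $M_l(\K)$, and the solution is irreducible.

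The main obstacle is purely bookkeeping: one must track precisely which entries of the products $X^{l-1}Y^v$ and $YE_{i,j}$ are nonzero and confirm that every normalizing scalar is a nonzero partial geometric sum, so that each matrix unit can genuinely be isolated. I would also remark that the argument never invokes the value of $\beta$, since only the subdiagonal entries of $Y$ are used; thus irreducibility holds for every $\beta\in\K$.
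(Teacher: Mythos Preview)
Your argument is correct and follows essentially the same line as the paper's: both proofs first extract the matrix units $E_{1,j}$ from the monomials $X^{l-1}Y^{v}$ via facts~(i) and~(ii) of~(\ref{solution}), and then induct to fill in the remaining rows. The only difference is in that inductive step. The paper stays with the ordered monomials $X^{l-m}Y^{l-n}$, observing that the $m$th row of such a monomial is a nonzero scalar times $\e_{mn}$ and subtracting off the rows $1,\dots,m-1$ already handled; you instead left-multiply by $Y$, using $YE_{i,j}=Y_{i+1,i}\,E_{i+1,j}$, which yields each new matrix unit directly without any subtraction. Your route is slightly cleaner and, in addition, makes explicit the nonvanishing of the partial geometric sums $\sum_{i=0}^{l-k}\gamma^{i}$, a point the paper uses but does not spell out.
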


\begin{proof}
Let $H=\text{Span}_{\K}\{X^iY^j; \quad 0\leq i, j\leq l-1\}$. For any positive integers $m,\,  n \leq l$, we will show that the elementary matrices $\e_{mn}$ are in $H$.

For fixed integers $m,\,  n \leq l$, consider the matrix $X^{l-m}Y^{l-n}$. By (\ref{solution} i), the last nonzero row of $X^{l-m}Y^{l-n}$ is the $m$th row, which is the same as the $l$th row of the matrix $Y^{l-n}$. By (\ref{solution} ii), the only nonzero entry in the $l$th row of $Y^{l-n}$ is
\[(Y^{l-n})_{l, n}=\prod_{j=n+1}^{l} \sum_{i=0}^{l-j} \gamma^{i}
\]
Hence,
\[X^{l-m}Y^{l-n}=\Big(\prod_{j=n+1}^{l} \sum_{i=0}^{l-j} \gamma^{i}\Big) \e_{mn} + \Big(\sum_{s=1}^{m-1} \sum_{t=1}^l (X^{l-m}Y^{l-n})_{s,t}\Big) \e_{st}\]
When $m=1$, we have
\[X^{l-1}Y^{l-n}=\Big(\prod_{j=n+1}^{l} \sum_{i=0}^{l-j} \gamma^{i}\Big) \e_{1n}\]
for $n=1, \ldots, l$. Then, by induction on $m$, it shows that any $\e_{mn} \in H$.
\end{proof}

\begin{prop}\label{singular}
Suppose $(A, B)$ with $BA \neq AB$ is an irreducible solution. If the only eigenvalue of $A$ is $0$, then $(A, B)$ is equivalent to a solution as follows.
{\small
\[X = \left(%
\begin{array}{ccccc}
 0 & 1 & \ &\ & \  \\
 \ & 0 & 1 &\ & \  \\
 \ & \ & \ &\ddots &   \\
 \ & \ & \ & 0 &  1 \\
 \ & \ & \ & \ & 0 \\
\end{array}%
\right)_{l\times l}
Y_{\alpha'\text{s}} =\left(%
\begin{array}{cccccc}
 \gamma^{l-1}\alpha_l & \gamma^{l-2}\alpha_{l-1} &\ldots   &\gamma^2\alpha_3 &\gamma\alpha_2 &\alpha_1  \\
  \sum_{i=0}^{l-2} \gamma^{i} & \gamma^{l-2}\alpha_{l} &\ldots   &\gamma^2\alpha_4 &\gamma\alpha_3 &\alpha_2  \\
   0 & \sum_{i=0}^{l-3} \gamma^{i} &\ldots  &\gamma^2\alpha_5  &\gamma\alpha_4 &\alpha_3 \\

   \vdots & \vdots &\ddots  &\vdots & \vdots &\vdots \\

 0& 0 &\ldots & 1+\gamma &\gamma \alpha_l& \alpha_{l-1} \\
 0 &0 &\ldots & 0 & 1&  \alpha_l \\
\end{array}%
\right)\]}
where $\alpha_i$ are scalars in $\K$ for $i=1,2,\ldots, l$.
\end{prop}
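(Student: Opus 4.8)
The plan is to prove the statement in two stages: first reduce the nilpotent matrix $A$ to the upper shift matrix $X$, and then determine all matrices $B$ that can partner it. I would begin by recording the general facts available from \ref{nonsingular}. Since the only eigenvalue of $A$ is $0$, $A$ is nilpotent, so by \ref{nonsingular}(i) we have $A^l=0$ and the size $n$ satisfies $n\le l$. By \ref{nonsingular}(ii) the matrix $U=BA-AB$ is nonsingular and $UA=\gamma AU$, $BU=\gamma UB$. The whole difficulty is concentrated in showing that $A$ is a single regular nilpotent Jordan block of size $l$, hence similar to $X$; once this is done, conjugating by a suitable $Q$ replaces $(A,B)$ by an equivalent solution $(X,Y)$, and only the shape of $Y$ remains to be found.

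For the reduction I would exploit a $\Z/l$-grading coming from $U$. From $UA=\gamma AU$ and $BU=\gamma UB$ one gets that $U^l$ commutes with $A$ and $B$, hence with all of $M_n(\K)=\langle A,B\rangle$, so $U^l$ is scalar by Schur's lemma. As $U$ is nonsingular its minimal polynomial divides a separable polynomial $t^l-c$, so $U$ is diagonalizable, and since $A$ and $B$ multiply $U$-eigenvalues by $\gamma$ and $\gamma^{-1}$, irreducibility forces the eigenvalues to be a single $\langle\gamma\rangle$-orbit $\{\gamma^i\mu_0\}_{i\in\Z/l}$. Writing $V=\bigoplus_{i\in\Z/l}V_i$ for the eigenspaces, $A\colon V_i\to V_{i+1}$ and $B\colon V_i\to V_{i-1}$. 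The degree-zero part of the grading on $M_n(\K)$ is $\bigoplus_i\mathrm{End}(V_i)$; it is the image of the degree-zero subalgebra of $\A$, which is commutative (generated by $xy$ together with the central elements $x^l,y^l$, the latter being central because $\sum_{i=0}^{l-1}\gamma^i=0$). Commutativity forces $\dim V_i\le 1$ for every $i$.

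It then remains to see that all $l$ pieces are nonzero and that $A$ is a single block. First, the nonzero pieces must form a contiguous arc of $\Z/l$, for otherwise a product of $A$'s and $B$'s crossing an empty piece vanishes and $\langle A,B\rangle\neq M_n(\K)$. With $\dim V_i\le1$, put $q_i:=AB|_{V_i}\in\K$; restricting $BA-\gamma AB=I$ to a one-dimensional $V_i$ and using $BA|_{V_i}=AB|_{V_{i+1}}$ gives the recurrence $q_{i+1}=\gamma q_i+1$. If the support were a proper arc of length $n<l$, the boundary relations force $q=0$ at both ends, and solving the recurrence across the arc yields
\[
0=q_{\text{end}}=1+\gamma+\cdots+\gamma^{n-1}=\frac{\gamma^{n}-1}{\gamma-1},
\]
i.e.\ $\gamma^n=1$, impossible for $0<n<l$. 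Hence $n=l$, every $V_i$ is one-dimensional, and writing $Av_i=a_iv_{i+1}$ we have $\prod_i a_i=0$ from $A^l=0$. Now the recurrence $q_{i+1}=\gamma q_i+1$ runs over the full cycle (consistently, again because $\sum\gamma^i=0$), and its solution can vanish for at most one index since the $\gamma^i$ are distinct; together with $\prod a_i=0$ this forces \emph{exactly one} $a_i$ to vanish. Thus $\dim\ker A=1$ and $A$ is a single Jordan block of size $l$, so $A$ is similar to $X$. I expect this reduction to be the main obstacle, and it is the point addressed by Lemma \ref{reduce-singular}.

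Finally, after conjugating so that $A=X$, the partner $B$ becomes a matrix $Y$ still satisfying $YX-\gamma XY=I$, and I would classify these exactly as in Solution \ref{solution}. The equation shifts the rows and columns of $Y$, producing a downward recurrence among the entries; the first-column and last-row boundary conditions, combined with $\sum_{i=0}^{l-1}\gamma^i=0$, pin the subdiagonal and strictly-lower entries to the values shown in Solution \ref{solution}, while the $l$ entries on and above a chosen diagonal remain free and assemble into $\alpha_1,\dots,\alpha_l$. Equivalently, the solution set is the coset of Solution \ref{solution} by $\ker(Z\mapsto ZX-\gamma XZ)$, and a short computation shows this kernel is $l$-dimensional (the main diagonal and each superdiagonal contribute one parameter, while the boundary conditions kill the subdiagonals), matching the $l$-parameter family $Y_{\alpha'\text{s}}$. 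Hence every such $Y$ equals $Y_{\alpha'\text{s}}$ for suitable scalars, and $(A,B)$ is equivalent to $(X,Y_{\alpha'\text{s}})$, as claimed.
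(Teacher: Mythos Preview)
Your argument is correct, but it takes a genuinely different route from the paper's, and the paper's is considerably shorter. For the reduction of $A$ to a single Jordan block, the paper simply puts $A$ into Jordan canonical form $C=\mathrm{diag}(J_1,\ldots,J_k)$ with nilpotent blocks of sizes $m_1,\ldots,m_k$, partitions $D=QBQ^{-1}$ accordingly, and reads off the diagonal of each block equation $D_{ii}J_i-\gamma J_iD_{ii}=I$: this gives the telescoping chain $1=-\gamma d_{21}$, $1=d_{21}-\gamma d_{32}$, \ldots, $1=d_{m_i,m_i-1}$, which forces $1+\gamma+\cdots+\gamma^{m_i-1}=0$, hence $l\mid m_i$ for every $i$. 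Combined with the bound $n\le l$ from \ref{nonsingular}(i), this immediately yields a single $l\times l$ block. Your approach via the $\Z/l$-grading from the $U$-eigenspaces, the commutativity of the degree-zero subalgebra $\A_0=\K[xy,x^l,y^l]$, and the recurrence on $q_i=AB|_{V_i}$ is valid and more structural---it explains \emph{why} the block size must be $l$ rather than just computing it---but it is substantially more work. The determination of $Y$ is then essentially the same entry-by-entry computation in both; your coset description via $\ker(Z\mapsto ZX-\gamma XZ)$ is a clean reformulation of it.

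Two small mis-references: Lemma \ref{reduce-singular} does \emph{not} address the reduction of $A$ to a single Jordan block; it is the later step that reduces $(X,Y_{\alpha'\text{s}})$ further to $(X,Y_\beta)$. And Solution \ref{solution} exhibits a particular one-parameter family rather than classifying all $Y$ with $YX-\gamma XY=I$; that classification is carried out within the proof of the proposition itself.
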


\begin{proof}
Suppose that $(A, B)$ with $BA \neq AB$ is an $n\times n$ irreducible solution for some positive integer $n$ and that the only eigenvalue of $A$ is $0$. Let $C=QAQ^{-1}$ be the Jordan Canonical form for $A$, where $Q$ is an $n\times n$ matrix. Set $D=QBQ^{-1}$. Then $(C,\,D)$ is also an irreducible solution.

Suppose the Jordan blocks $J_i$ in $C$ are $m_i\times m_i$ for $i=1, \ldots, k$. Partition the matrix $D$ into blocks $(D_{ij})$, where $D_{ij}$ are $m_i \times m_j$ matrices, for $1 \leq i, j \leq k$. Then we have $D_{ii} J_i -\gamma J_i D_{ii} =1\quad \text{for} \quad i=1, 2, \ldots, k$. Fix $i$, let $D_{ii}=(d_{pq})_{m_i\times m_i}$. Note that along the diagonal line of the matrix $D_{ii}J_i-\gamma J_i D_{ii}$ should be all ones, i.e.,
\[
1=0-\gamma d_{21},\;
1=d_{21}-\gamma d_{32},\;
\ldots,\;
1=d_{m_i-1\, m_i-2}-\gamma d_{m_i\, m_i-1}, \;\text{and}\;
1=d_{m_i\, m_i-1}
.\]
It then follows that $1+\gamma+\gamma^2+\ldots+\gamma^{m_i-1}=0$. But $\gamma$ is an $l$th primitive root of unity. Thus $m_i$ must be an integer multiple of $l$. Hence, $n=\sum_i m_i$ must be greater than or equal to $l$. It then follows from (\ref{nonsingular}, i) that $C$ has to be the $l\times l$ Jordan block with zeroes on the diagonal line, i.e., $C=QAQ^{-1}=X$.

Now, it is sufficient to show that $D=QBQ^{-1}$ must have the form $Y_{\alpha'\text{s}}$. It follows from $BA-\gamma AB= 1$ that $DX-\gamma XD=1$. We will explore the problem element-wise. By (\ref{solution}, i), we have
\begin{equation*}
(DX)_{ij}=\left\{
  \begin{array}{ll}
    d_{i\,j-1}, & j>1 \\
    0, & j=1
    \end{array}
\right.
\quad \text{and} \quad
(\gamma XD)_{ij}=\left\{
  \begin{array}{ll}
    \gamma d_{i+1\, j} & i<l \\
    0, & i=l
    \end{array}
\right.
\end{equation*}
Since $i+1\neq 1$ and $j-1\neq l$, it is clear that $d_{1l}$ is a free variable.

Case 1, $i<j$. Fix $1\leq k \leq l-1$, the $k$th superdiagonal line of $DX-\gamma XD$ has entries
\[(DX-\gamma XD)_{i, i+k}=d_{i, i+k-1} - \gamma d_{i+1, i+k}\quad \text{for} \quad i=1, 2, \ldots, l-k.\] That is,
$d_{1, k-1}-\gamma d_{2\, k}=0,\; d_{2\, k}-\gamma d_{3, k+1}=0\; \ldots\; d_{l-k, l-1}-\gamma d_{l-k+1, l}=0$. Then, inductively, \[d_{i, i+k-1}=\gamma^{l-k} d_{l-k+1, l}\quad \text{for} \quad i=1, 2, \ldots, l-k,\] Therefore, on the $l$th column of $D$, we get free variables $d_{2\, l}, \ldots, d_{l-1, l}$.

Case 2, $i=j$. We have
\[ 1=(DX-\gamma XD)_{ii}=\left\{
  \begin{array}{ll}
    0-\gamma d_{2\, 1} & i=1 \\
    d_{i, i-1} - \gamma d_{i+1, i} & 1<i<l\\
    d_{l, l-1}-0 & i=l
    \end{array}
\right.\]
Thus, the entries on the subdiagonal line of $D$ are
\[ d_{21}=-\f{1}{\gamma}=\sum_{i=0}^{l-2} \gamma^i, \quad
d_{32}=\f{d_{21}-1}{\gamma}=\sum_{i=0}^{l-3} \gamma^i,\quad \ldots, \quad d_{l-1, l-2}=1+\gamma, \; \text{and}\; d_{l, l-1}=1
\]

Case 3, $i>j$. We have
\[ 0=(DX-\gamma XD)_{ij}=\left\{
  \begin{array}{ll}
    0-\gamma d_{i+1, 1} & j=l\; \text{and}\; j<i<l\\
    d_{l, j-1}  & i=l\; \text{and}\; 1<j<i\\
    d_{i, j-1}-\gamma d_{i+1, j} & i<l\; \text{and}\; 1<j<i
    \end{array}
\right.\]
It is not hard to see, inductively, that $d_{ij}=0$ for any $2<i\leq l$ and $1\leq j \leq i-2$. The proposition follows.
\end{proof}

\begin{lem}\label{reduce-singular}
Any solution $(X, Y_{\alpha'\text{s}})$ in \emph{(\ref{singular})} is equivalent to a solution $(X, Y_{\beta})$ in \emph{(\ref{solution})}. The equivalence classes of solutions in \emph{(\ref{singular})} are $\; [\,(X, Y_{\beta})\,] \; \text{for}\; \beta \in \K $.
\end{lem}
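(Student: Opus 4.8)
The plan is to reduce everything to the conjugation action of the centralizer of $X$. If two solutions in \emph{(\ref{singular})} with the same first matrix $X$ are equivalent, the conjugating matrix $Q$ must satisfy $QXQ^{-1}=X$, i.e. $Q$ commutes with $X$; since $X$ is a single regular nilpotent Jordan block, its centralizer is exactly $\K[X]$, whose units are the $q(X)$ with $q(0)\neq 0$. Conversely, conjugating $(X,Y_{\alpha'\text{s}})$ by such a unit fixes $X$ and yields another matrix $Y'$ with $Y'X-\gamma XY'=I$, which by the element-wise analysis in the proof of \emph{(\ref{singular})} is again of the form $Y_{\alpha''\text{s}}$. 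Thus the classification of \emph{(\ref{singular})} up to equivalence is precisely the orbit set of the action of $\{q(X):q(0)\neq 0\}$, and I must show each orbit meets the subfamily $\{Y_\beta\}$ of \emph{(\ref{solution})} in exactly one point.

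To make the action transparent I would encode $Y_{\alpha'\text{s}}$ by a single polynomial. Because $\e_l,X\e_l,\dots,X^{l-1}\e_l=\e_l,\e_{l-1},\dots,\e_1$ is a basis, $\K^l$ is cyclic over $\K[X]$ with generator $\e_l$, and the last column of $Y_{\alpha'\text{s}}$ reads $Y\e_l=\sum_{k=1}^l\alpha_k\e_k=f(X)\e_l$ with $f(X)=\sum_{k=1}^l\alpha_kX^{l-k}$; the solution $Y_\beta$ of \emph{(\ref{solution})} corresponds to $f=\beta X^{l-1}$. The engine of the argument is the commutation identity $YX^p=\gamma^pX^pY+s_pX^{p-1}$, where $s_p=\sum_{i=0}^{p-1}\gamma^i$, proved by a one-line induction from $YX=\gamma XY+I$; extended linearly it gives $Yp(X)=p(\gamma X)Y+\frac{p(\gamma X)-p(X)}{(\gamma-1)X}$ for any polynomial $p$. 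Substituting this into $Y'=q(X)Yq(X)^{-1}$ and using $Y\e_l=f(X)\e_l$, I expect the clean transformation law $f'\equiv rf+\frac{r-1}{(\gamma-1)X}\pmod{X^l}$, where $r(X)=q(X)/q(\gamma X)$ and $r(0)=1$. Deriving this law correctly, and keeping track of what each factor of $q$ does, is the crux of the whole lemma.

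For the existence half I would eliminate the low-order coefficients of $f$ one at a time. Taking $q=1+cX^p$ gives $r=1+c(1-\gamma^p)X^p+\cdots$, and a short check shows $f'-f=(r-1)f+\frac{r-1}{(\gamma-1)X}$ has lowest degree $p-1$, so conjugation by $1+cX^p$ affects only the coefficients of $f$ in degrees $\ge p-1$ and changes the degree-$(p-1)$ coefficient by exactly $-c\,s_p$. Since $\gamma$ is a primitive $l$th root of unity, $s_p\neq 0$ for $1\le p\le l-1$; hence running $p=1,2,\dots,l-1$ and solving for $c$ at each step kills the coefficients of $X^0,\dots,X^{l-2}$ in turn, each step leaving the already-cleared lower coefficients untouched. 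This collapses $f$ to $\beta X^{l-1}$, i.e. $(X,Y_{\alpha'\text{s}})\sim(X,Y_\beta)$. The triangular ``degree $\ge p-1$'' structure is exactly what makes this termination work, so I would verify it carefully.

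For the classification half I would exhibit a conjugation invariant fixing $\beta$. From $YX=\gamma XY+I$ and $\gamma^l=1$ one gets $Y^lX=XY^l$ (as in \emph{(\ref{nonsingular}, i)}), and since the solution $(X,Y_\beta)$ of \emph{(\ref{solution})} is irreducible, $Y^l$ commutes with all of $M_l(\K)$ and is therefore scalar. A direct cyclic computation with $Y_\beta\colon \e_1\mapsto s_{l-1}\e_2\mapsto\cdots\mapsto\big(\prod_{m=1}^{l-1}s_m\big)\e_l\mapsto\beta\big(\prod_{m=1}^{l-1}s_m\big)\e_1$ then gives $Y_\beta^l=\beta P\,I$ with $P=\prod_{m=1}^{l-1}s_m\neq 0$. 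Scalar matrices are preserved under conjugation, so $(X,Y_\beta)\sim(X,Y_{\beta'})$ forces $\beta P=\beta'P$, i.e. $\beta=\beta'$. Combining the two halves, every solution of \emph{(\ref{singular})} is equivalent to a unique $Y_\beta$, and the equivalence classes are exactly $[(X,Y_\beta)]$ for $\beta\in\K$.
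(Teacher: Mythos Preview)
Your argument is correct and complete. Both you and the paper reduce to conjugation by the centralizer of $X$, but the executions differ. The paper writes the generic centralizer element as the upper-triangular Toeplitz matrix $P$ with free parameters $p_1,\dots,p_{l-1}$, sets $PY_{\alpha'\text{s}}=Y_\beta P$, and solves the resulting row equations directly to obtain a triangular recursion (*) for the $p_k$ in terms of the $\alpha_k$ together with a final relation (**) determining $\beta$; the separation of the classes is then read off from $\det Y_\beta$. You instead encode $Y_{\alpha'\text{s}}$ by the polynomial $f$ via the cyclic vector $\e_l$, derive the transformation law $f'\equiv rf+\frac{r-1}{(\gamma-1)X}$ with $r=q(X)/q(\gamma X)$, and clear the coefficients of $f$ one degree at a time using $q=1+cX^p$; for the invariant you use the scalar $Y_\beta^l=\beta\prod_{m=1}^{l-1}s_m\cdot I$ rather than the determinant. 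Your route is more structural and makes the triangular mechanism transparent without row-by-row matrix bookkeeping, and the $Y^l$ invariant ties the classification back to \emph{(\ref{nonsingular},\,i)}; the paper's computation, on the other hand, produces the explicit conjugating matrix $P$ in one pass. One minor point: your appeal to irreducibility to conclude $Y_\beta^l$ is scalar is correct but unnecessary, since your cyclic computation already exhibits $Y_\beta^l=\beta P\,I$ directly.
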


\begin{proof}
Suppose $(X, Y_{\beta})$ is a solution as in (\ref{solution}). Direct computation shows that the following uppertriangular matrix $P$ is a nonsingular matrix such that $P^{-1}XP=X$.
\[P = \left(%
\begin{array}{cccccc}
 1 & p_{l-1} & p_{l-2} &\ldots & p_2 & p_1  \\
 \ & 1 & p_{l-1} &\ldots & p_3 & p_2  \\
 \ & \ & \ &\ddots &   \\
 \ & \ & \ & \ & 1 & p_{l-1} \\
 \ & \ & \ & \ & \ & 1\\
\end{array}%
\right)_{l\times l}
\]
where $p_k$ are scalars in $\K$ for $k=1, \ldots, l-1$. Then the matrices $X$ and $P^{-1}Y_{\beta}P$ also form a solution. It is shown in the proof of (\ref{singular}) that if $(X, D)$ is a solution then $D$ must be one of the form $Y_{\alpha'\text{s}}$. Thus, $P^{-1}Y_{\beta}P=Y_{\alpha'\text{s}}$ for some $\alpha_1, \ldots, \alpha_l$. Consider the matrices $PY_{\alpha'\text{s}}$ and $Y_{\beta}P$. The second row of $Y_{\beta}P$ is
\[
\sum_{i=0}^{l-2} \gamma^{i} \cdot (1\quad p_{l-1}\quad \ldots \quad p_1).
\]
Set $\bar{Y}_{\alpha'\text{s}}$ be the lower right $(l-1)\times (l-1)$ block of the matrix $Y_{\alpha'\text{s}}$ and $R$ be the $(l-1)\times l$ matrix obtained by removing the last row of the matrix $X$. Then the second row of $PY_{\alpha'\text{s}}$ can be written as
\[
(0\quad 1\quad p_{l-1}\quad \ldots \quad p_2)\cdot Y_{\alpha'\text{s}}=\Big(\sum_{i=0}^{l-2}\gamma^{i}\quad 0\quad \ldots \quad 0\Big)+  (1\quad p_{l-1}\quad\ldots\quad p_2)\cdot \bar{Y}_{\alpha'\text{s}}R
\]
It then follows from $PY_{\alpha'\text{s}}=Y_{\beta}P$ that
\[
\sum_{i=0}^{l-2} \gamma^{i}\cdot (p_{l-1}\quad \ldots \quad p_1)=(1\quad p_{l-1}\quad \ldots \quad p_2)\cdot \bar{Y}_{\alpha'\text{s}}
\]
Note that $\bar{Y}_{\alpha'\text{s}}$ is the sum of the upper triangular matrix
\[M=\left(%
\begin{array}{ccccc}
 \gamma^{l-2}\alpha_l & \gamma^{l-3}\alpha_{l-1} &\ldots   &\gamma\alpha_3 &\alpha_2  \\
  0 & \gamma^{l-3}\alpha_{l} &\ldots   &\gamma\alpha_4 &\alpha_3  \\

   \vdots & \vdots &\ddots   & \vdots &\vdots \\

 0& 0 &\ldots  &\gamma \alpha_l& \alpha_{l-1} \\
 0 &0 &\ldots  & 0&  \alpha_l \\
\end{array}%
\right)
\]
and the $(l-1)\times (l-1)$ subdiagonal matrix $L$ with entries
$\sum_{i=0}^{l-3} \gamma^{i},\quad \ldots,\; 1+\gamma,\; 1$ along the diagonal line. Then we have
\[
\sum_{i=0}^{l-2} \gamma^{i} \cdot (1\quad p_{l-1}\quad \ldots \quad p_1) - (1\quad p_{l-1}\quad \ldots \quad p_2)\cdot L = (1\quad p_{l-1}\quad \ldots \quad p_2)\cdot M
\]
and so
\[
\Big(\gamma^{l-2}\cdot p_{l-1}\quad \gamma^{l-3}(1+\gamma )\cdot  p_{l-2}\quad \ldots\quad \gamma  \sum_{i=0}^{l-3} \gamma^{i}\cdot p_2\quad \sum_{i=0}^{l-2} \gamma^{i}\cdot p_1 \Big) = (1\; p_{l-1}\; \ldots \; p_2)\cdot M
\]
Therefore
\begin{equation}\tag{*}
\left\{\begin{array}{ll}
p_{l-1}=\alpha_l &\\
p_k =\Big(\sum_{i=0}^{l-1-k} \gamma^{i}\Big)^{-1} \cdot \Big(\alpha_{k+1} + \sum_{i=k+1}^{l-1} \alpha_{l+k+1-i}\, \cdot p_i\Big) & \text{for} \quad k=l-2, \ldots, 1
\end{array}
\right.
\end{equation}
Moreover, it follows from $(PY_{\alpha'\text{s}})_{1,l}=(Y_{\beta}P)_{1,l}$ that \begin{equation*}\tag{**}
\beta=\alpha_1 + \sum_{i=1}^{l-1} \alpha_{l+1-i}\cdot p_i
\end{equation*}
This shows that any solution $(X, Y_{\alpha'\text{s}})$ is equivalent to a solution $(X, Y_{\beta})$ for some $\beta$ in $\K$. The equivalence condition is given by a polynomial condition on $\alpha$'s and $\beta$ that can be obtained inductively from (*) and (**). Simply arguing by determinant, we have the equivalence classes are
$\; [\,(X, Y_{\beta})\,] \; \text{for}\; \beta \in \K $.
\end{proof}

The preceding lemma shows that, up to equivalence, the only irreducible solution with both determinants equal to zero is the solution $(X, Y_0)$.  This solution corresponds to the only $x$- and $y$-torsion simple module $L(0)$ over the quantum Weyl algebra $\A$ at the root $\gamma$. (cf. \cite[Example 4.1]{Jor}) Next, we provide the equivalence classification for irreducible solutions found in \cite{BLTPS}, which are corresponding to the $x$- and $y$-torsion-free simple $\A$-modules.

\begin{lem}\label{reduce-nonsingular}
Any solution $(X_{\lambda}, Y_{\lambda \, b's})$ in \emph{(\ref{nonsingular} iii)} is equivalent to a solution $(X_{\lambda}, Y_{\lambda \, \eta})$ where
\[Y_{\lambda \, \eta} =\left(%
\begin{array}{cccccc}
 \frac{1}{(1-\gamma)\gamma \lambda} & 1 &0 &\ldots   &0 &0  \\
  0 & \frac{1}{(1-\gamma)\gamma^2 \lambda} &1 &\ldots   &0 &0  \\
  \ & \ & \ &\ & \ &\  \\
   \vdots & \vdots  & \vdots &\ddots  &\vdots & \vdots \\
 \ & \ & \ &\ & \ &\ \\
 0& 0 & 0 &\ldots & \frac{1}{(1-\gamma)\gamma^{l-1} \lambda}& 1 \\
\eta &0 &0 & \ldots & 0 &   \frac{1}{(1-\gamma)\gamma^l \lambda} \\
\end{array}%
\right)
\]
for some $\eta \in \K^{\times}$. The equivalence classes of solutions in \emph{(\ref{nonsingular} iii)} are $[\,(X_{\lambda}, Y_{\lambda\, \eta})\,]$ where $\eta \in \K^{\times}$ and $\lambda \in \K^{\times}/\langle \gamma \rangle$.
\end{lem}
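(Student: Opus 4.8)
The plan is to mirror the strategy of Lemma \ref{reduce-singular}: normalize within each equivalence class by conjugating by matrices that fix the $X$-part, then read off a complete set of invariants. The essential new feature is that here $X_{\lambda}=\lambda\,\text{diag}(\gamma,\gamma^2,\dots,\gamma^l)$ is diagonal with $l$ \emph{distinct} eigenvalues (distinct because $\gamma$ is a primitive $l$th root of unity), so its centralizer consists exactly of the diagonal matrices. Thus the only conjugations preserving $X_{\lambda}$ are by $Q=\text{diag}(q_1,\dots,q_l)$, under which the diagonal of $Y_{\lambda\,b's}$ is unchanged, each superdiagonal entry $b_k$ is rescaled to $q_k b_k q_{k+1}^{-1}$ for $k=1,\dots,l-1$, and the corner entry $b_l$ is rescaled to $q_l b_l q_1^{-1}$.

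For the first assertion I would take $q_1=1$ and $q_{k+1}=q_k b_k$; since every $b_k\neq 0$ this $Q$ is nonsingular, it clears all superdiagonal entries to $1$, and the corner becomes $\eta=\prod_{i=1}^{l}b_i\in\K^{\times}$. This exhibits $(X_{\lambda},Y_{\lambda\,b's})\sim(X_{\lambda},Y_{\lambda\,\eta})$, and $\eta$ ranges over all of $\K^{\times}$ as the $b_i$ do.

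For the classification I would extract two invariants. First, a telescoping computation shows that under any diagonal conjugation the product of the cycle entries is preserved: $\prod_{k=1}^{l-1}\bigl(q_k b_k q_{k+1}^{-1}\bigr)\cdot\bigl(q_l b_l q_1^{-1}\bigr)=\prod_{k=1}^{l}b_k$, so $\eta$ is an invariant among solutions sharing $X_{\lambda}$; and requiring the normalized superdiagonal to stay all-ones forces $q_1=\dots=q_l$, i.e.\ $Q$ scalar, whence the corner value $\eta$ is unchanged. Hence, for fixed $\lambda$, distinct $\eta$ give inequivalent solutions. Second, if $QX_{\lambda}Q^{-1}=X_{\lambda'}$, then the two diagonal matrices have equal eigenvalue sets $\lambda\langle\gamma\rangle=\lambda'\langle\gamma\rangle$, which forces $\lambda'=\gamma^m\lambda$ for some $m$, i.e.\ $\lambda\equiv\lambda'$ in $\K^{\times}/\langle\gamma\rangle$; moreover $Q$ must then be a monomial matrix (a permutation times a diagonal). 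Conjugating $(X_{\lambda},Y_{\lambda\,\eta})$ by the cyclic permutation matrix realizing $X_{\lambda}\mapsto X_{\gamma\lambda}$ cyclically permutes the $b$'s, hence leaves $\eta=\prod_i b_i$ fixed, and after a further diagonal normalization as above yields $(X_{\gamma\lambda},Y_{\gamma\lambda\,\eta})$. Combining these, the pair $(\bar\lambda,\eta)\in(\K^{\times}/\langle\gamma\rangle)\times\K^{\times}$ is a complete invariant, giving the stated list of classes.

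The main obstacle is the rigidity argument: pinning down that \emph{every} equivalence between two solutions of this shape is effected by a monomial matrix, so that no hidden conjugation can alter $\eta$ or move $\lambda$ outside its coset. This rests entirely on $X_{\lambda}$ having distinct eigenvalues, which localizes $Q$ to the monomial matrices; the remaining steps—the telescoping invariance of $\prod_i b_i$ and the index-chase verifying that the cyclic shift sends $Y_{\lambda\,\eta}$ to a $Y_{\gamma\lambda\,b's}$ with the $b$'s permuted—are short and routine.
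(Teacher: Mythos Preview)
Your argument is correct and follows essentially the same route as the paper: the diagonal matrix $Q=\text{diag}(1,b_1,b_1b_2,\dots,b_1\cdots b_{l-1})$ you construct is exactly the paper's $P$, and the cyclic permutation handling $\lambda\mapsto\gamma^i\lambda$ is likewise the same. Your treatment of the classification is in fact more explicit than the paper's---you pin down the centralizer of $X_\lambda$ and verify directly that $\eta$ and the coset of $\lambda$ are invariants, whereas the paper simply asserts the necessary conditions $\lambda'^l=\lambda^l$ and $\prod b_i=\prod c_i$ without spelling out the monomial-matrix rigidity.
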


\begin{proof}
Let $(X_{\lambda}, Y_{\lambda \, b's})$ and $(X_{\lambda'}, Y_{\lambda' \, c's})$ be two irreducible solutions as in (\ref{nonsingular} iii). They are equivalent only if $\lambda'^l=\lambda^l$ and $\prod_{i=1}^l b_i = \prod_{i=1}^l c_i$. Suppose $\lambda'=\gamma^i \lambda$ for some $1\leq i \leq l$.
Then, by using an appropriate permutation matrix, we have $(X_{\gamma^i \lambda},\, Y_{\gamma^i \lambda\, b's})$ is equivalent to $(X_{\lambda},\, Y_{\lambda\, b's})$, where the entries $b$'s in $Y_{\gamma^i \lambda\, b's}$ and those in $Y_{\lambda\, b's }$ are the same, up to the corresponding permutation. It then remains to show that $(X_{\lambda},\, Y_{ \lambda\, b's})$ is equivalent to $(X_{\lambda},\, Y_{\lambda\, \eta})$, where $\eta=\prod_{i=1}^l b_i$. This can be done by using a diagonal matrix $P$ with entries
\[1, \; b_1, \; b_1 b_2,\; \ldots,\; b_1\cdots b_{l-1}\]
along the diagonal line.
\end{proof}

Combining Proposition \ref{singular}, Lemma \ref{reduce-singular}, (\ref{nonsingular} iii) and Lemma \ref{reduce-nonsingular}, we have

\begin{prop}\emph{(}\cite[Theorem 5.8]{DGO}\emph{)}\label{all}
Any irreducible solution $(A, B)$ to the equation $yx-\gamma xy=1$, in which $BA \neq AB$, is equivalent to either the solution $(X_{\lambda}, Y_{\lambda \, \eta})$ in \emph{(\ref{reduce-nonsingular})} or the solution $(X, Y_{\beta})$ in \emph{(\ref{solution})}.
\end{prop}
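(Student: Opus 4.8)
The plan is to split on the eigenvalues of $A$ and then quote the classification and reduction results already in hand. Since $\K$ is algebraically closed, the characteristic polynomial of $A$ factors completely, so exactly one of two situations occurs: either $A$ possesses a nonzero eigenvalue, or $0$ is the only eigenvalue of $A$. These alternatives are mutually exclusive and jointly exhaustive, and each is precisely the hypothesis of one of the earlier results, so the whole proposition reduces to following each branch to its normal form.

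In the first branch, where $A$ has a nonzero eigenvalue, I would apply (\ref{nonsingular} iii): since $(A,B)$ is irreducible, it is $l\times l$ and equivalent to some $(X_{\lambda}, Y_{\lambda\, b's})$. Lemma \ref{reduce-nonsingular} then collapses this to $(X_{\lambda}, Y_{\lambda\, \eta})$ for a suitable $\eta\in\K^{\times}$. In the second branch, where $0$ is the only eigenvalue of $A$, Proposition \ref{singular} applies (its hypotheses $BA\neq AB$ and irreducibility are exactly what is given) and yields equivalence to some $(X, Y_{\alpha'\text{s}})$; Lemma \ref{reduce-singular} then reduces this to $(X, Y_{\beta})$ for some $\beta\in\K$. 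Thus each branch delivers one of the two asserted families.

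What ties the two steps of each branch together is transitivity of the equivalence relation: if $Q_1$ conjugates $(A,B)$ to the intermediate solution and $Q_2$ conjugates that intermediate solution to the final normal form, then $Q_2Q_1$ conjugates $(A,B)$ directly to the normal form, since $Q_2(Q_1 A Q_1^{-1})Q_2^{-1}$ and the analogous expression in $B$ telescope. Reflexivity and symmetry are equally immediate from the definition, so equivalence is genuinely an equivalence relation and the chaining of conjugations is legitimate.

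I do not expect a real obstacle here: all of the substantive work has already been carried out in Proposition \ref{singular}, Lemma \ref{reduce-singular}, (\ref{nonsingular} iii), and Lemma \ref{reduce-nonsingular}. The only points requiring care are confirming that the eigenvalue dichotomy is exhaustive---which is exactly where algebraic closure of $\K$ is invoked---and checking that the standing hypotheses (irreducibility and $BA\neq AB$) are precisely those demanded by the cited results in each case. Granting these, the proposition is simply the record of which branch of the dichotomy produces which normal form.
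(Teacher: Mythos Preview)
Your argument is correct and matches the paper's approach exactly: the paper simply records that the proposition follows by combining Proposition \ref{singular}, Lemma \ref{reduce-singular}, (\ref{nonsingular} iii), and Lemma \ref{reduce-nonsingular}, and your eigenvalue dichotomy is precisely how these pieces fit together.
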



\end{document}